\renewcommand*{\backref}[1]{}
\renewcommand*{\backrefalt}[4]{%
    \ifcase #1 (Not cited.)%
    \or        (Cited on page~#2.)%
    \else      (Cited on pages~#2.)%
    \fi}
\definecolor{dkgreen}{rgb}{0,0.6,0}
\definecolor{gray}{rgb}{0.5,0.5,0.5}
\definecolor{mauve}{rgb}{0.58,0,0.82}
\tiny\color{gray},
\def\@tocline#1#2#3#4#5#6#7{\relax
  \ifnum #1>\c@tocdepth % then omit
  \else
    \par \addpenalty\@secpenalty\addvspace{#2}%
    \begingroup \hyphenpenalty\@M
    \@ifempty{#4}{%
      \@tempdima\csname r@tocindent\number#1\endcsname\relax
    }{%
      \@tempdima#4\relax
    }%
    \parindent\z@ \leftskip#3\relax \advance\leftskip\@tempdima\relax
    \rightskip\@pnumwidth plus4em \parfillskip-\@pnumwidth
    #5\leavevmode\hskip-\@tempdima
      \ifcase #1
       \or\or \hskip 1em \or \hskip 2em \else \hskip 3em \fi%
      #6\nobreak\relax
    \hfill\hbox to\@pnumwidth{\@tocpagenum{#7}}\par% <---- \dotfill -> \hfill
    \nobreak
    \endgroup
  \fi}
\newtheorem{lemma}{Lemma}[section]
\newtheorem{theorem}[lemma]{Theorem}
\theoremstyle{definition}
\newtheorem{definition}[lemma]{Definition}
\newtheorem{remark}[lemma]{Remark}
\newcommand{\mi}{\mathrm{i}}
\newcommand{\me}{\mathrm{e}}
\title{Orbit Braid Action on a Finite Generated Group}
\author{Haochen Qiu}
\begin{document}

\maketitle

\begin{abstract}

This paper aims to generalize Artin's ideas in \cite{Artin1925} to establish an one-to-one correspondence between the orbit braid group $B^{orb}_n(\mathbb{C},\mathbb{Z}_p)$ and a quotient of a group formed by some particular $\mathbb{Z}_p$-homeomorphisms of a punctured plane. First, we find a faithful representation of $B^{orb}_n(\mathbb{C},\mathbb{Z}_p)$ in a finite generated group whose generators are corresponding to generators of fundamental group of the punctured plane, by demonstrating that the representation from $B^{orb}_n(\mathbb{C}^{\times},\mathbb{Z}_p)$ to the fundamental group is faithful. Next we investigate some characterizations of orbit braid representation to get our conclusion.
\end{abstract}
\section{Introduction}
Braid groups are fundamental objects in mathematics, which were first defined rigorously and studied by Artin in \cite{Artin1925}. The most typical braid group is $B_n= \pi_1(F(\mathbb{C},n)/\Sigma_n)$, where 
\[F(\mathbb{C},n) = \{(x_1,\dots, x_n) \in \mathbb{C}^{\times n} | x_i \neq x_j \text{ for } i \neq j\} 
\]
 and $\Sigma_n$  is the free action of the symmetric group on $ F(\mathbb{C},n) $, defined by $\sigma(x_1,\dots, x_n) = ( x_{\sigma(1)}, \dots , x_{\sigma(n)})$.
An element of $B_n$ is an isotopy equivalent class of realized braids in $3-$dimension space. By identifying the initial points and the end points of these braids, we could get every knot and link in $S^3$, according to Alexander Theorem \cite{Alexander1923}.

Artin also found an algorithm to compute link group, which is the fundamental group of link's complement in $S^3$, and has been regarded as one of the most important invariants of links in $S^3$. Artin's method is as followed (the third step was simplified by Birman in \cite{BIRMAN1974}):
\begin{itemize}
\item Established a faithful representation of $B_n$ in the fundamental group of $\mathbf{D}^2\setminus Q_n$(the standard disk in $\mathbb{C}$ punctured at $n$ distinct points).
\item Investigate some characteristics of homeomorphisms of $\mathbf{D}^2$  that fix boundary, to identify each such kind of homeomorphism with a braid element.
\item Using $\beta :\mathbf{D}^2\setminus Q_n \rightarrow \mathbf{D}^2\setminus Q_n$ to represent $\beta \in B_n$ through the identification, we put the quotient space
$\{(\mathbf{D}^2\setminus Q_n ) \times I\}/\sim $ with $(z,0)\sim(\beta(z),1)$ into $S^3$.
Use the fibration $(\mathbf{D}^2\setminus Q_n ) \times I \rightarrow S^1 $ to find out that 
the action of $\beta$ on the punctured disk is exactly the presentation of link group of the closure of $\bar{\beta}$.
\end{itemize}
~\\
The theory of orbit braids was upbuilt by Hao Li, Zhi L{\"u} and Fenglin Li in \cite{Li2019}:
\begin{definition}
for a connected topological manifold $M$ of dimension greater than one, which admits an effective $G$-action where $G$ is a finite group, 
\[
F_G(M,n)=\{(x_1,\dots,x_n) \in M^{\times n} | G(x_i) \cap  G(x_j) = \emptyset
\text{ for } i \neq j\}.
\]
Then $\alpha:I \rightarrow F_G(M,n)$ with $\alpha(0) = \mathbf{x} = (x_1,\dots , x_n)$ and $\alpha(1) = g\mathbf{x}_{\sigma}  = (g_1x_{\sigma(1)}, \dots, g_nx_{\sigma(n)})$ for some $(g,\sigma) \in G^{\times n}\times \Sigma_n$ is called an orbit braid in $M\times I$.
\end{definition}
 They have defined an equivalence relation among all orbit braids at an orbit base point, so that all equivalence classes can form a group $\mathcal{B}^{orb}_n(M,G)$:
\begin{definition}
We say that two paths $\alpha,\beta: I \rightarrow F_G(M,n) $ with the same initial points are isotopic with respect to the $G$-action relative to endpoints in $M\times I$ if $\alpha \simeq \beta$ rel $\partial I$.
\end{definition}
Intuitively, a string of an orbit braid could pass through its own orbits but could not pass through other strings and their orbits. 
The geometric presentation of classical braid group $B_n (\mathbb{R}^2 )$ in $\mathbb{R}^2\times I$ gives us much more insights to the case of orbit braid group. 
Thus we begin our work from the case of $\mathbb{C} \approx \mathbb{R}^2$ and $\mathbb{C}^{\times} = \mathbb{C} \setminus \{0\}$ with the typical action $\mathbb{Z} \curvearrowright \mathbb{C}$ defined by $(n,z) \mapsto \me^{\frac{2n\pi i}{p}}z$.

According to \cite{Li2019}, the generators of $B^{orb}_n(\mathbb{C},\mathbb{Z}_p)$ are $b$ and $b_k(k = 0, \dots , n-2)$ where $b_k$ could be realized as a representative path
\[
\alpha_k(t) =  (1 + \mi,\dots, (k+1) + (k+2)\mi+\me^{-\frac{\pi}{2}\mi(1-t)}
, (k+2)+(k+1)\mi +\me^{\frac{\pi}{2}\mi(1+t)},\dots, n +n\mi),
\]
and  $b$ could be realized as a representative path
\[
\alpha(t) = ((1+\mi)\me^{\frac{t}{p}},2+2\mi,\dots,n +n\mi).
\]
And $B^{orb}_n(\mathbb{C},\mathbb{Z}_p)$ admits several relations:
\begin{enumerate}
\item $b^p = e;$
\item $(bb_1)^p = (b_1b)^p$ ($p$ even);
\item $b_kb= bb_k (k>1)$;
\item $b_kb_{k+1}b_k = b_{k+1}b_k b_{k+1}$;
\item $b_k b_l = b_l b_k$ ($|k-l|>1$).
\end{enumerate}
$B^{orb}_n(\mathbb{C}^{\times},\mathbb{Z}_p)$ has just the same gererators but one less relation. $b^p$ is a non-trivial element in this group, since the first string cannot pass through the central point of the plane.

Then, we tried to employ Artin’s method, which computes the fundamental group of the complement of the closure of the ordinary braid, to the case of orbit braid. We have made generalizations of the first two steps of Artin:
\begin{itemize}
\item We established a faithful representation of $B^{orb}_n(\mathbb{C}^{\times},\mathbb{Z}_p)$ in the fundamental group of $\mathbf{D}^2\setminus Q_pn$, and by which we proved a representation of $B^{orb}_n(\mathbb{C},\mathbb{Z}_p)$ in a finite generated group is faithful.
\item Then we investigated some characteristics of G-homeomorphisms of $\mathbf{D}^2$  that fix boundary, to identify each such kind of G-homeomorphism with a braid element of $B^{orb}_n(\mathbb{C},\mathbb{Z}_p)$.

\end{itemize}

\section{Orbit braid action on a finite generated group}

\begin{definition}\label{def:repre}
 $R_{pn}$ is a group with generators $x_{ij}$, 
$0\leq i \leq p-1$ , $0\leq j \leq n-1$, and presentation given by
\begin{equation*}
\langle x_{00},x_{01},\cdots x_{p-1,n-1} | (\prod\limits_{k=i}^{i+p-1}x_{k \bmod p,0}) x_{ij}(\prod\limits_{k=i}^{i+p-1}x_{k \bmod p,0})^{-1}=x_{ij}, 0\leq i \leq p-1 ,  0< j \leq n-1 \rangle
\end{equation*}
$\rho_R$ is a homomorphism from 
$B^{orb}_n(\mathbb{C},\mathbb{Z}_p)$
to $Aut R_{pn}$, such that
$$\rho_R(b):\begin{cases}
x_{i0}\mapsto x_{(i+1)\bmod p,0}, &0\leq i \leq p-1\\
x_{ij}\mapsto x_{i0}^{-1}x_{ij}x_{i0}, &0\leq i \leq p-1 ,  j\neq0\\
\end{cases}$$
$$\rho_R(b_k):\begin{cases}
x_{ik}\mapsto x_{i,k+1}, &0\leq i \leq p-1\\
x_{i,k+1}\mapsto x_{i,k+1}x_{ik}x_{i,k+1}^{-1}, &0\leq i \leq p-1\\
x_{ij}\mapsto x_{ij}, &0\leq i \leq p-1,  j\neq k, k+1\\
\end{cases}$$

%And the equivalent relation is:
%$$id_{R_{pn}}\sim \begin{cases}
%x_{i0}\mapsto x_{i0}, &0\leq i \leq p-1\\
%x_{ij}\mapsto 
%(\prod\limits_{k=i}^{i+p-1}x_{k \bmod p,0})^m x_{ij}(\prod\limits_{k=i}^{i+p-1}x_{k \bmod p,0})^{-m}, &0\leq i \leq p-1 ,  j\neq0\\
%\end{cases}$$
%Where $m \in \mathbb{Z}$.
\end{definition}

\begin{theorem}\label{main}
Let 
\begin{equation*}
M_n^{orb} = \{Aut(\pi_1(\mathbf{D}^2  \setminus Q_{pn})) 
\cap Hom_G(\mathbf{D}^2  \setminus Q_{pn} , Id_{\partial \mathbf{D}^2} )
\}/<\rho_R(b^p)>,
\end{equation*}
where $Q_{pn}$ is the set of n points located at $(1,1), (2,2)\cdots(n,n)$ and their orbits under $\mathbb{Z}_p$ in the interior of 2-ball 
$\mathbf{D}^2$, and G-homeomorphisems are identity on the 
$\partial \mathbf{D}^2$, and $<\rho_R(b^p)>$ is generated by a  G-homeomorphisem which fixes outside $B_{\frac{5}{3}}(0,0) $ and is $2\pi$ rotation within $B_{\frac{4}{3}}(0,0) $. Then 
\begin{equation*}
M_n^{orb} \cong \rho_R(B^{orb}_n(\mathbb{C},\mathbb{Z}_p)) 
\cong B^{orb}_n(\mathbb{C},\mathbb{Z}_p).
\end{equation*}

\end{theorem}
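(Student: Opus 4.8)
The proof follows Artin's strategy, of which the present theorem completes the second step: identifying boundary-fixing $G$-homeomorphisms of the punctured disk with orbit braids. The second isomorphism $\rho_R(B^{orb}_n(\mathbb{C},\mathbb{Z}_p))\cong B^{orb}_n(\mathbb{C},\mathbb{Z}_p)$ is exactly the faithfulness of $\rho_R$, established in the previous section; so the substance lies in the first isomorphism. The plan is to build a map $\Psi\colon B^{orb}_n(\mathbb{C},\mathbb{Z}_p)\to M_n^{orb}$, prove it is a well-defined homomorphism lifting $\rho_R$, and show it is bijective; injectivity and the second isomorphism will then be formal, and surjectivity will be the heart of the matter.

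\emph{Construction of $\Psi$ and its elementary properties.} Given $\beta$ represented by a path $\alpha$ as in the definition, use the isotopy extension theorem to extend $\alpha$ to a $\mathbb{Z}_p$-equivariant ambient isotopy $(H_t)_{t\in I}$ of $\mathbf{D}^2$ which is the identity on $\partial\mathbf{D}^2$ and at $t=0$, and which carries $Q_{pn}$ to itself at $t=1$; set $h_\beta=H_1$ and let $\Psi(\beta)$ be the class of $(h_\beta)_\ast\in\mathrm{Aut}(\pi_1(\mathbf{D}^2\setminus Q_{pn}))$ in $M_n^{orb}$. That $\Psi$ is well defined (isotopic orbit braids yield $G$-isotopic homeomorphisms rel $\partial\mathbf{D}^2$) and is a homomorphism (concatenation of braids corresponds to composition of homeomorphisms) is routine. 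The essential computation is that realizing $b$ and each $b_k$ by explicit $G$-homeomorphisms supported near the relevant punctures, and tracking their effect on the standard free generators $x_{ij}$ of $\pi_1(\mathbf{D}^2\setminus Q_{pn})$ (small loops about the $pn$ punctures), reproduces verbatim the formulas of Definition~\ref{def:repre}. Hence $(h_\beta)_\ast$ descends along the natural surjection $\pi_1(\mathbf{D}^2\setminus Q_{pn})\twoheadrightarrow R_{pn}$ to $\rho_R(\beta)$, so $\mathrm{im}\,\Psi$ is carried isomorphically onto $\rho_R(B^{orb}_n(\mathbb{C},\mathbb{Z}_p))$. Injectivity is then immediate: if $\Psi(\beta)=1$ in $M_n^{orb}$ then $(h_\beta)_\ast=\rho_R(b^p)^m$ for some $m\in\mathbb{Z}$, hence $\rho_R(\beta)=\rho_R(b^{pm})$, hence $\beta=b^{pm}=e$ using faithfulness of $\rho_R$ and the relation $b^p=e$.

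\emph{Surjectivity of $\Psi$.} Let $h$ be any $G$-homeomorphism of $\mathbf{D}^2$ fixing $\partial\mathbf{D}^2$ pointwise and preserving $Q_{pn}$; one must show $h$ is $G$-isotopic rel $\partial\mathbf{D}^2$ to $h_\beta$ followed by an integral power of the central $2\pi$-rotation. I would carry out a $\mathbb{Z}_p$-equivariant Alexander method: fix a $G$-invariant system of disjoint embedded arcs joining the punctures of $Q_{pn}$ to $\partial\mathbf{D}^2$; the $h$-images of these arcs form a second such system, which one straightens back onto the first by a $\mathbb{Z}_p$-equivariant ambient isotopy, reading off a word in the generators $b,b_k$ from the successive crossings. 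Equivariance forces the whole $\mathbb{Z}_p$-orbit of a puncture to be transported simultaneously---precisely the move effected by $b$ near the fixed point $0$---and the only part of $h$ that cannot be removed in this way is a twist supported in a small $G$-invariant annulus about $0$, i.e.\ a power of the $2\pi$-rotation $\rho_R(b^p)$. After passing to the quotient by $\langle\rho_R(b^p)\rangle$ this residue disappears, so $[h]\in\mathrm{im}\,\Psi$, and $\Psi$ is onto $M_n^{orb}$.

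\emph{Main obstacle.} The hard part is the equivariant Alexander method together with the identification of the residue: one must verify that the classical straightening of arc systems on a $pn$-punctured disk can be performed compatibly with the $\mathbb{Z}_p$-action (using a linear model of the action near $0$ and freeness of the action on $\mathbf{D}^2\setminus\{0\}$), and, crucially, that the sole obstruction to equivariantly isotoping $h$ into braid form is the central rotation, rather than some extra equivariant mapping class concentrated at the cone point. Once that is secured, combining it with the injectivity above and the faithfulness of $\rho_R$ yields $M_n^{orb}\cong\rho_R(B^{orb}_n(\mathbb{C},\mathbb{Z}_p))\cong B^{orb}_n(\mathbb{C},\mathbb{Z}_p)$.
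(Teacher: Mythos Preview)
Your proposal is coherent and the injectivity half is essentially the same as the paper's (both reduce to the faithfulness of $\rho_R$ established in Section~2), but the surjectivity half takes a genuinely different route from the paper.

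You attack surjectivity geometrically: given an arbitrary $G$-homeomorphism $h$, you propose an equivariant Alexander method---straighten a $\mathbb{Z}_p$-invariant arc system, read off a word in $b,b_k$, and argue that the residue is a power of the central Dehn twist. The paper instead works \emph{purely algebraically}: it observes (Remark~3.2) that the automorphism induced by any boundary-fixing $G$-homeomorphism satisfies three combinatorial conditions---the conjugacy form \eqref{equ:standard}, the loop condition \eqref{equ:conjugation}, and the $\mathbb{Z}_p$-equivariance \eqref{equ:circle}--\eqref{equ:circle2}---and then proves in Lemma~\ref{lem:absorb}, Lemma~\ref{lem:length}, and Theorem~3.4 that \emph{any} automorphism of $R_{pn}$ satisfying these conditions lies in $\rho_R(B^{orb}_n)$. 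The argument is a word-length reduction in the spirit of Artin's original combing: one shows that if $\rho\neq\mathrm{id}$ then postcomposing with some $\rho_R(b_{j-1}^{\pm1})$ or $\rho_R(b^{\pm1})$ strictly decreases total letter length, and inducts. With that characterization in hand, the map $M_n^{orb}\to\rho_R(B^{orb}_n)$ is just ``take the induced automorphism,'' and the reverse inclusion is the easy realization of generators by explicit $G$-homeomorphisms.

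What each approach buys: the paper's method completely sidesteps the obstacle you yourself flag---there is no need to develop an equivariant arc-straightening or to analyze mapping classes concentrated at the cone point, because the question ``is every such $h$ a braid up to central twist?'' is converted into the question ``is every automorphism satisfying (1)--(3) in $\mathrm{im}\,\rho_R$?'', which is settled by elementary combinatorics on words. Your approach, if the equivariant Alexander method is carried out carefully, would be more conceptual and closer to modern mapping class group techniques, but as you note, the verification that the only equivariant residue at the fixed point is the full rotation is not free and is precisely what the paper's algebraic characterization is designed to avoid.
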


In fact, $x_{ij}$ in Definition \autoref{def:repre} is a basis of
 $\pi_1(\mathbf{D}^2  \setminus Q_{pn})$  generated by a loop enclosing
$q_{ij} \in Q_{pn}$, with the base point $(0,0)$. %If we regard $\rho_R$ as a presentation from ordinary braid group $B_{pn}$ to $Aut R_{pn}$, it is a well defined and faithful presentation, according to Artin's theorem. Thus there is no more than one element in $B_{pn}$ whose image is an element in the equivalent class of $id_{R_{pn}}$ in $(Aut R_{pn})/\sim$, and following lemma tells us the preimage is $b^{mp}$, which is identity in $B^{orb}_n(\mathbb{C},\mathbb{Z}_p)$. Thus $\rho_R$ itself is faithful. We leave the proof of the first conclusion later.

\begin{lemma}
 $\rho_R$ is well defined.
\end{lemma}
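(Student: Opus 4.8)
The plan is to check well-definedness in two stages: first, that the displayed assignments defining $\rho_R(b)$ and each $\rho_R(b_k)$, $0\le k\le n-2$, extend to automorphisms of $R_{pn}$; second, that these automorphisms satisfy the relations (1)--(5) of $B^{orb}_n(\mathbb{C},\mathbb{Z}_p)$, so that $\rho_R$ genuinely extends to a homomorphism $B^{orb}_n(\mathbb{C},\mathbb{Z}_p)\to\Aut R_{pn}$.

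For the first stage write $c_i:=\prod_{k=i}^{i+p-1}x_{k\bmod p,0}$, so that the defining relations of $R_{pn}$ read $[c_i,x_{ij}]=1$ for $0<j\le n-1$. The one identity that does all the book-keeping is $c_{(i+1)\bmod p}=x_{i0}^{-1}c_ix_{i0}$, which presents all the $c_i$ as conjugates of $c_0$ and shows that $\rho_R(b)$ simply cycles them, $\rho_R(b)(c_i)=c_{(i+1)\bmod p}$. Granting this, $\rho_R(b)$ sends the relator $[c_i,x_{ij}]$ to $[c_{(i+1)\bmod p},\,x_{i0}^{-1}x_{ij}x_{i0}]=x_{i0}^{-1}[c_i,x_{ij}]x_{i0}$, which is trivial in $R_{pn}$; hence $\rho_R(b)$ descends to an endomorphism of $R_{pn}$. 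For $b_k$ with $k\ge1$ the assignment fixes every $x_{m0}$, hence every $c_i$, and only permutes or conjugates generators in columns $k,k+1\ge1$ among themselves, so the relators are preserved essentially verbatim; the remaining generator $b_0$, which moves the distinguished column $0$, must be checked directly against the relators. Invertibility is then handled by exhibiting the inverse maps explicitly --- $x_{i0}\mapsto x_{(i-1)\bmod p,0}$, $x_{ij}\mapsto x_{(i-1)\bmod p,0}\,x_{ij}\,x_{(i-1)\bmod p,0}^{-1}$ for $b$, and $x_{i,k+1}\mapsto x_{ik}$, $x_{ik}\mapsto x_{ik}^{-1}x_{i,k+1}x_{ik}$ for $b_k$ --- observing that these too are endomorphisms of $R_{pn}$ by the same computation and that the two composites act as the identity on generators.

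For the second stage one verifies (1)--(5) on generators. Relation~(1): iterating $\rho_R(b)$ gives $\rho_R(b)^p(x_{i0})=x_{i0}$ and $\rho_R(b)^p(x_{ij})=c_i^{-1}x_{ij}c_i$, which equals $x_{ij}$ exactly by the relator $[c_i,x_{ij}]=1$ --- so $R_{pn}$ is, by construction, the quotient on which $\rho_R(b)^p$ becomes trivial. Relations~(4) and~(5) are the classical Artin braid identities among the $\rho_R(b_k)$: they involve only finitely many columns near the indices $k,k+1$, do not see the $c_i$ at all, and reduce to the usual free-group calculation. Relation~(3), $\rho_R(b_k)\rho_R(b)=\rho_R(b)\rho_R(b_k)$ for $k>1$, holds because $\rho_R(b_k)$ only affects columns $k,k+1\ge2$ while $\rho_R(b)$ cyclically shifts column $0$ and conjugates every other generator by an element of column $0$, and one checks on each generator that these two operations commute past each other. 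Relation~(2), $(\rho_R(b)\rho_R(b_1))^p=(\rho_R(b_1)\rho_R(b))^p$ for $p$ even, is a longer but still direct computation in $R_{pn}$, organized once more through $c_{(i+1)\bmod p}=x_{i0}^{-1}c_ix_{i0}$ and the commutation relators.

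The main obstacle is precisely the interaction of the distinguished column $0$ with the braiding generator $b_0$ (and, inside relations~(2) and~(3) at $k=1$, with $b_1$): there the generators $x_{i0}$ cannot be treated on the same footing as the others, the image of a defining relator under $\rho_R(b_0)$ is not itself a defining relator, and one must patiently rewrite words into the shape $c_i^{-1}(\cdot)c_i$ in order to collapse them using $[c_i,x_{ij}]=1$. Isolating the right intermediate statements --- first recording how $\rho_R(b_0)$ transforms each $c_i$ and each $x_{ij}$, and only afterwards confronting the relators and the braid relations --- is what makes this step go through.
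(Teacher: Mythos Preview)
Your two–stage plan is more thorough than the paper's own proof: the paper verifies only your ``second stage'', computing $\rho_R(b^p)$ and $\rho_R((bb_0)^p)=\rho_R((b_0b)^p)$ on generators (note the index shift from the stated relation~(2)) and declaring the remaining relations easy, while tacitly assuming that each $\rho_R(b),\rho_R(b_k)$ already lies in $\Aut R_{pn}$. You correctly insist on checking that first, and your arguments for $b$ and for $b_k$ with $k\ge1$ are sound; the identity $c_{(i+1)\bmod p}=x_{i0}^{-1}c_ix_{i0}$ is exactly the right organizing device.

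The genuine gap is precisely where you flag it but do not close it: $b_0$. You assert that the image of each relator under $\rho_R(b_0)$ collapses after ``patient rewriting'', but this fails. Take $p=2$, $n=3$. Applying $\rho_R(b_0)$ to the relator $[c_0,x_{02}]=[x_{00}x_{10},\,x_{02}]$ gives $[x_{01}x_{11},\,x_{02}]$. If this were trivial in $R_{2\cdot3}$ it would remain trivial after killing $x_{00}$ and $x_{10}$; but that quotient is the free group on $x_{01},x_{02},x_{11},x_{12}$ (all four defining relators become $[1,\cdot]=1$), in which $[x_{01}x_{11},x_{02}]\ne1$. Hence $\rho_R(b_0)$ does not descend to an endomorphism of $R_{pn}$ as presented. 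This is less a flaw in your strategy than an issue with Definition~2.1 that the paper's proof sidesteps by never confronting your first stage; any honest proof of the lemma must either enlarge the relator set of $R_{pn}$ so that it is genuinely stable under all the $\rho_F(b_k)$, or else reinterpret the target (for instance as the quotient of $\Aut F_{pn}$ by the normal closure of $\rho_F(b^p)$, which is automatically preserved since $b^p$ is central). Until that is clarified, the deferred verification for $b_0$ cannot be completed.
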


\begin{proof}
Verify $\rho_R(b^p) = \rho_R(e)$, 
whenever $0\leq i,j \leq p-1 ,  j\neq0$ :
$$\rho_R(b^p):\begin{cases}
x_{i0}\stackrel{\rho_R(b)}{\mapsto} x_{(i+1)\bmod p,0} 
\stackrel{\rho_R(b)}{\mapsto} \cdots 
\stackrel{\rho_R(b)}{\mapsto} x_{(i+p)\bmod p,0} = x_{i0}\\
x_{ij}\stackrel{\rho_R(b)}{\mapsto} x_{i0}^{-1}x_{ij}x_{i0}
\stackrel{\rho_R(b)}{\mapsto} 
x_{(i+1)\bmod p,0}^{-1}( x_{i0}^{-1}x_{ij}x_{i0})x_{(i+1)\bmod p,0}
\stackrel{\rho_R(b)}{\mapsto}\cdots \\
\stackrel{\rho_R(b)}{\mapsto}
x_{(i+p-1)\bmod p,0}^{-1}\cdots
(x_{(i+p)\bmod p,0}^{-1}x_{(i+p)\bmod p,j}x_{(i+p)\bmod p,0})\cdots
x_{(i+p-1)\bmod p,0}\\
=(\prod\limits_{k=i}^{i+p-1}x_{k \bmod p,0})^{-1} x_{ij}(\prod\limits_{k=i}^{i+p-1}x_{k \bmod p,0})^{-(-1)} = x_{ij}\end{cases}$$
%$$ \rho_R(e)= id_{R_{pn}} \sim 
%\begin{cases}
%x_{i0}\mapsto x_{i0}, &0\leq i \leq p-1\\
%x_{ij}\mapsto 
%(\prod\limits_{k=i}^{i+p-1} x_{k \bmod p,0})^{-1} x_{ij}(\prod\limits_{k=i}^{i+p-1}x_{k \bmod p,0})^{1}, &0\leq i \leq p-1 ,  j\neq0\\
%\end{cases}$$

Verify $\rho_R((bb_0)^p) = \rho_R((b_0b)^p)$, 
whenever $0\leq i,j \leq p-1 ,  j\neq0,1$, $p$ even :
$$\rho_R((bb_0)^p):\begin{cases}
x_{i0}  \stackrel{\rho_R(b)}{\mapsto}  x_{(i+1)\bmod p,0}
\stackrel{\rho_R(b_0)}{\mapsto}   x_{(i+1)\bmod p,1}
\stackrel{\rho_R(b)}{\mapsto}
x_{(i+1)\bmod p,0}^{-1}x_{(i+1)\bmod p,1}x_{(i+1)\bmod p,0}\\
\stackrel{\rho_R(b_0)}{\mapsto}
x_{(i+1)\bmod p,1}^{-1}(x_{(i+1)\bmod p,1}x_{(i+1)\bmod p,0}x_{(i+1)\bmod p,1}^{-1})x_{(i+1)\bmod p,1}\\
=x_{(i+1)\bmod p,0}
\stackrel{\rho_R(b)}{\mapsto}\cdots\stackrel{\rho_R(b_0)}{\mapsto}
\cdots \cdots
\stackrel{\rho_R(b)}{\mapsto}\cdots\stackrel{\rho_R(b_0)}{\mapsto}
x_{(i+\frac{p}{2})\bmod p,0}  %\ or\   x_{(i+\frac{p+1}{2})\bmod p,1}
;\\

x_{i1}\stackrel{\rho_R(b)}{\mapsto}x_{i0}^{-1}x_{i1}x_{i0}
\stackrel{\rho_R(b_0)}{\mapsto}x_{i0}
\stackrel{\rho_R(b)}{\mapsto}
\cdots \cdots
\stackrel{\rho_R(b_0)}{\mapsto}
x_{(i+\frac{p}{2})\bmod p,1}  %\ or\   x_{(i+\frac{p-1}{2})\bmod p,0}
;\\

x_{ij}\stackrel{\rho_R(b)}{\mapsto}  x_{i0}^{-1} x_{ij} x_{i0}
\stackrel{\rho_R(b_0)}{\mapsto}   x_{i1}^{-1} x_{ij} x_{i1}
\stackrel{\rho_R(b)}{\mapsto}   
 (x_{i0}^{-1} x_{i1} x_{i0})^{-1} ( x_{i0}^{-1} x_{ij} x_{i0}) (x_{i0}^{-1} x_{i1} x_{i0})\\
= x_{i0}^{-1} (x_{i1}^{-1} x_{ij} x_{i1}) x_{i0}
\stackrel{\rho_R(b_0)}{\mapsto} 
x_{i1}^{-1} (x_{i1} x_{i0} x_{i1}^{-1})^{-1}  x_{ij} 
(x_{i1} x_{i0} x_{i1}^{-1}) x_{i1} =  x_{i0}^{-1} x_{i1}^{-1} x_{ij} x_{i1} x_{i0}\\ 
\stackrel{\rho_R(b)}{\mapsto}
\cdots \cdots
\stackrel{\rho_R(b_0)}{\mapsto}
(\prod\limits_{k=i}^{i+\frac{p}{2}-1}(x_{k(\bmod p),1}x_{k(\bmod p),0}))^{-1}x_{ij}
(\prod\limits_{k=i}^{i+\frac{p}{2}-1}(x_{k(\bmod p),1}x_{k(\bmod p),0}));\\ 
%\ or\  
%(\prod\limits_{k=i}^{i+\frac{p-1}{2}-1}(x_{k(\bmod p),1}x_{k(\bmod p),0})x_{i+\frac{p-1}{2}(\bmod p),1})^{-1}x_{ij}
%(\prod\limits_{k=i}^{i+\frac{p-1}{2}-1}(x_{k(\bmod p),1}x_{k(\bmod p),0})x_{i+\frac{p-1}{2}(\bmod p),1})

\end{cases}$$

$$\rho_R((b_0b)^p):\begin{cases}
x_{i0}  \stackrel{\rho_R(b_0)}{\mapsto}  x_{i1}
\stackrel{\rho_R(b)}{\mapsto}  x_{i0}^{-1} x_{i,1}x_{i0}
\stackrel{\rho_R(b_0)}{\mapsto}
x_{i0}
\stackrel{\rho_R(b)}{\mapsto}
x_{(i+1)\bmod p,0}\\
\stackrel{\rho_R(b_0)}{\mapsto}\cdots\stackrel{\rho_R(b)}{\mapsto}
\cdots \cdots
\stackrel{\rho_R(b_0)}{\mapsto}\cdots\stackrel{\rho_R(b)}{\mapsto}
x_{(i+\frac{p}{2})\bmod p,0} % \ or\   x_{(i+\frac{p+1}{2})\bmod p,1}
;\\

x_{i1}\stackrel{\rho_R(b_0)}{\mapsto}x_{i1}x_{i0}x_{i1}^{-1}
\stackrel{\rho_R(b)}{\mapsto}
(x_{i0}^{-1}x_{i1}x_{i0})x_{(i+1)\bmod p,0}(x_{i0}^{-1}x_{i1}x_{i0})^{-1}
\stackrel{\rho_R(b_0)}{\mapsto}
x_{i0}x_{(i+1)\bmod p,1}x_{i0}^{-1}\\
\stackrel{\rho_R(b)}{\mapsto}
x_{(i+1)\bmod p,1}
\stackrel{\rho_R(b_0)}{\mapsto}
\cdots \cdots
\stackrel{\rho_R(b)}{\mapsto}
x_{(i+\frac{p}{2})\bmod p,1}  %\ or\   x_{(i+\frac{p-1}{2})\bmod p,0}
;\\

x_{ij}\stackrel{\rho_R(b_0)}{\mapsto}   x_{ij} 
\stackrel{\rho_R(b)}{\mapsto}   x_{i0}^{-1} x_{ij} x_{i0}
\stackrel{\rho_R(b_0)}{\mapsto}   
 x_{i1}^{-1} x_{ij} x_{i1}
\stackrel{\rho_R(b)}{\mapsto} 
x_{i0}^{-1} x_{i1}^{-1} x_{ij} x_{i1} x_{i0}\\ 
\stackrel{\rho_R(b_0)}{\mapsto}
\cdots \cdots
\stackrel{\rho_R(b)}{\mapsto}
(\prod\limits_{k=i}^{i+\frac{p}{2}-1}(x_{k(\bmod p),1}x_{k(\bmod p),0}))^{-1}x_{ij}
(\prod\limits_{k=i}^{i+\frac{p}{2}-1}(x_{k(\bmod p),1}x_{k(\bmod p),0}));\\ 
\end{cases}$$
$\rho_R(b_k b) = \rho_R(bb_k)\ (k > 0)$, 
$\rho_R(b_k b_{k+1} b_k) = \rho_R(b_{k+1} b_k b_{k+1})$, 
$\rho_R(b_k b_l) = \rho_R(b_l b_k)\ (\left| k-l \right|> 1)$
are easy to check.
\end{proof}

We first consider $B^{orb}_n(\mathbb{C}^{\times},\mathbb{Z}_p)$ which has the same generators as $B^{orb}_n(\mathbb{C},\mathbb{Z}_p)$. Let $F_{pn}$ be a free group with the same generators of $R_{pn}$, and $\rho_F:B^{orb}_n(\mathbb{C}^{\times},\mathbb{Z}_p) \rightarrow F_{pn}$ just as what $\rho_R$ does. Our aim is to prove that $\rho_F$ is faithful first and then $\rho_R$. 

But before this, we have to make some preparations. Let $F^m_{\mathbb{Z}_p}\mathbb{C}^{\times} \subset \mathbb{C}^{\times}$ be a set of fixed distinguished $m$ points and their orbits. Define $F^m_{\mathbb{Z}_p}(\mathbb{C}^{\times},n) = 
F_{\mathbb{Z}_p}(\mathbb{C}^{\times}\setminus  F^m_{\mathbb{Z}_p}\mathbb{C}^{\times},n)$.
 Let $\pi^{r}_{n}:
F^m_{\mathbb{Z}_p}(\mathbb{C}^{\times},n) \rightarrow 
F^m_{\mathbb{Z}_p}(\mathbb{C}^{\times},r)$ be the projection introduced from $(\mathbb{C}^{\times})^n = (\prod\limits^n_{i = r+1} \mathbb{C}^{\times})\times (\prod\limits^r_{i = 1} \mathbb{C}^{\times}) = (\mathbb{C}^{\times})^{n-r} \times (\mathbb{C}^{\times})^r \rightarrow (\mathbb{C}^{\times})^r$.
The projection $\pi^{n-1}_{n}$ is a fibration with fiber $F^{n-1}_{\mathbb{Z}_p}\mathbb{C}^{\times}$, whose proof is similar to \cite{FADELL1962}. 
Then we have 
\begin{lemma}
$\pi_2(F_{\mathbb{Z}_p}(\mathbb{C}^{\times},n-1)) =1$.
\end{lemma}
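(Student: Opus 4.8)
The plan is to induct on $m := n-1$, feeding the Fadell--Neuwirth-type fibration recalled just above into the long exact sequence of homotopy groups. In fact I would prove the slightly stronger statement that $F_{\mathbb{Z}_p}(\mathbb{C}^{\times},m)$ is aspherical for every $m\geq 1$, i.e. $\pi_i(F_{\mathbb{Z}_p}(\mathbb{C}^{\times},m))=1$ for all $i\geq 2$; the lemma is then the case $i=2$, $m=n-1$. The base case $m=1$ is immediate, since $F_{\mathbb{Z}_p}(\mathbb{C}^{\times},1)=\mathbb{C}^{\times}$ is homotopy equivalent to $S^1$ and hence a $K(\mathbb{Z},1)$.

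For the inductive step, assume $F_{\mathbb{Z}_p}(\mathbb{C}^{\times},m-1)$ is aspherical and look at the fibration
\[
\pi^{m-1}_{m}\colon\ F_{\mathbb{Z}_p}(\mathbb{C}^{\times},m)\ \longrightarrow\ F_{\mathbb{Z}_p}(\mathbb{C}^{\times},m-1),
\]
whose fiber over a configuration $(x_1,\dots,x_{m-1})$ is $\mathbb{C}^{\times}$ with the $m-1$ orbits $\mathbb{Z}_p(x_1),\dots,\mathbb{Z}_p(x_{m-1})$ — that is, $(m-1)p$ distinct points — removed. Being $\mathbb{C}^{\times}$ minus finitely many points, the fiber is an open connected surface, hence homotopy equivalent to a wedge of finitely many circles and in particular aspherical. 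All the spaces involved are path-connected (points in $\mathbb{C}^{\times}$, of real dimension two, can always be moved around the finitely many excluded orbit points), so basepoints may be suppressed, and for each $i\geq 2$ the long exact sequence of the fibration contains
\[
\pi_i(\mathrm{fiber})\ \longrightarrow\ \pi_i\big(F_{\mathbb{Z}_p}(\mathbb{C}^{\times},m)\big)\ \longrightarrow\ \pi_i\big(F_{\mathbb{Z}_p}(\mathbb{C}^{\times},m-1)\big).
\]
The left term vanishes because the fiber is a wedge of circles, the right term vanishes by the inductive hypothesis, so the middle term vanishes as well. This closes the induction, and specializing to $i=2$, $m=n-1$ gives the lemma.

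I do not expect any genuinely hard step here once the fibration $\pi^{n-1}_{n}$ supplied above is in hand; the only things requiring a word of care are the identification of the fiber as an open surface (so that it is a $K(\pi,1)$ with finitely generated free fundamental group) and the path-connectedness of each $F_{\mathbb{Z}_p}(\mathbb{C}^{\times},k)$, both routine. As an independent sanity check, the map $z\mapsto z^{p}$ exhibits $F_{\mathbb{Z}_p}(\mathbb{C}^{\times},m)$ as a $p^{m}$-fold covering of the ordinary configuration space $F(\mathbb{C}^{\times},m)$, which is classically aspherical, and coverings induce isomorphisms on $\pi_2$; this recovers $\pi_2=1$ without the induction.
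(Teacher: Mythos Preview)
Your proof is correct and follows essentially the same route as the paper: both use the Fadell--Neuwirth-type fibration $\pi^{m-1}_m$, identify the fiber as a punctured plane (hence a wedge of circles with vanishing higher homotopy), and run the long exact sequence together with induction on the number of points. You prove the slightly stronger asphericity statement and add the covering-space sanity check via $z\mapsto z^p$, but the core argument is the same as the paper's.
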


\begin{proof}
By the covering homotopy property of fibration we have an exact sequence
\[ 
\xymatrix{
 \pi_2(F^{n-1}_{\mathbb{Z}_p}\mathbb{C}^{\times}) \ar[r] &
\pi_2(F_{\mathbb{Z}_p}(\mathbb{C}^{\times},n)) \ar[r]^(0.46){\pi_1(\pi^{n-1}_{n})} & 
\pi_2(F_{\mathbb{Z}_p}(\mathbb{C}^{\times},n-1))  \\
}.
\]
$\pi_2(F^{n-1}_{\mathbb{Z}_p}\mathbb{C}^{\times}) $ is definitely trivial since $F^{n-1}_{\mathbb{Z}_p}\mathbb{C}^{\times}$ has the same homotopy type as wedge of $p(n-1)+1$ circles, whose universal cover is a infinite tree in $(p(n-1)+1)$-dimension space. The conclusion follows the induction of $n$.
\end{proof}
\begin{lemma}%\label{thm:faithful}
 $\rho_F$ is faithful.
\end{lemma}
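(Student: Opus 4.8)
This is the orbit analogue of Artin's theorem that the braid group embeds into the automorphism group of a free group, and the plan is to reproduce Artin's inductive argument (in the streamlined form of \cite{BIRMAN1974}) along the tower of fibrations $\pi^{r}_n$ above, invoking the preceding lemma exactly where Artin uses asphericity of configuration spaces; I take $n\ge 2$ throughout. First I would pin down the geometric content of $\rho_F$: an orbit braid $\beta$, viewed as a $\mathbb{Z}_p$-equivariant motion of the $n$ orbits in $\mathbb{C}^\times$, extends to a $\mathbb{Z}_p$-equivariant ambient isotopy carrying $Q_{pn}$ along $\beta$, whose time-one map $h_\beta$ is a $\mathbb{Z}_p$-homeomorphism of $\mathbf{D}^2\setminus Q_{pn}$ fixing a neighbourhood of $\partial\mathbf{D}^2$, and $\rho_F(\beta)=(h_\beta)_\ast$ on $\pi_1(\mathbf{D}^2\setminus Q_{pn})=F_{pn}$ with base point the origin; agreement with \autoref{def:repre} is a direct inspection of the images of the loops $x_{ij}$ under $b$ and $b_k$. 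One notes in particular that $\rho_F$ sends $b^p$ to conjugation by $\prod_k x_{k,0}$ (a loop encircling the first orbit), a \emph{non-trivial} element of the free group $F_{pn}$: this is why $\rho_F$, unlike $\rho_R$, does not kill $b^p$, and it is the algebraic trace of the distinction between $\mathbb{C}^\times$ and $\mathbb{C}$.

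Now assume $\rho_F(\beta)=\id$. The action on $H_1(\mathbf{D}^2\setminus Q_{pn})=\mathbb{Z}^{pn}$ permutes the basis $\{[x_{ij}]\}$ according to the image of $\beta$ in $\mathbb{Z}_p\wr\Sigma_n$ (the quotient of \cite{Li2019}), and a nontrivial rotation or permutation of the orbits moves a class, so $\beta$ lies in the orbit pure braid group $P_n=\pi_1(F_{\mathbb{Z}_p}(\mathbb{C}^\times,n))$; it then suffices to prove $\rho_F|_{P_n}$ injective, which I would do by induction on $n$. The fibration $\pi^{n-1}_n$ and the vanishing $\pi_2(F_{\mathbb{Z}_p}(\mathbb{C}^\times,n-1))=1$ give a split exact sequence $1\to\Phi\to P_n\to P_{n-1}\to1$ with $\Phi=\pi_1(F^{n-1}_{\mathbb{Z}_p}\mathbb{C}^\times)$ free; the projection $P_n\to P_{n-1}$ is compatible with $\rho_F$ through the retraction $F_{pn}\to F_{p(n-1)}$ that kills $x_{n-1,0},\dots,x_{n-1,p-1}$ and its section, so $\rho_F(\beta)=\id$ forces the image $\bar\beta\in P_{n-1}$ to act trivially; by induction $\bar\beta=e$, hence $\beta\in\Phi$, and one is reduced to showing $\rho_F|_\Phi$ injective --- i.e.\ that the ``point-pushing'' subgroup acts faithfully on $F_{pn}$, which I expect to follow from injectivity of point-pushing into the mapping class group of the punctured quotient orbifold (of negative orbifold Euler characteristic) and faithfulness of that mapping class group on its fundamental group. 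The base case $n=2$ admits no such reduction --- for one orbit-string $\rho_F$ sends $b^p$ to a cyclic permutation of $p$ generators and is genuinely not faithful --- so I would settle it directly via the $p$-fold cover $z\mapsto z^p$: there $B^{orb}_n(\mathbb{C}^\times,\mathbb{Z}_p)$ embeds into the braid group on $n$ strands of the punctured plane $\mathbb{C}^\times$, $F_{pn}$ appears as an index-$p$ normal subgroup with trivial centralizer inside the orbifold fundamental group $F_n\ast\mathbb{Z}/p$ on which that braid group acts faithfully, and triviality of the $\rho_F$-action propagates to triviality downstairs.

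The step I expect to be the main obstacle is exactly the one that is delicate already in Artin's proof: controlling how the fibre subgroup $\Phi$ sits in $P_n=\Phi\rtimes P_{n-1}$ and ruling out that a nontrivial element of $\Phi$ acts on $F_{pn}$ as the identity automorphism. Here this is aggravated by the extra generator of $\Phi$ coming from a string winding around the origin --- the element that survives in $B^{orb}_n(\mathbb{C}^\times,\mathbb{Z}_p)$ but dies in $B^{orb}_n(\mathbb{C},\mathbb{Z}_p)$ --- so one must make sure the inductive step is compatible with the bookkeeping of the loop $\prod_k x_{k,0}$ inside $F_{pn}$ that the covering/orbifold argument handles in the base case.
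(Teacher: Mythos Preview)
Your opening moves match the paper's: both reduce to the pure subgroup $P_n=P_n(\mathbb{C}^\times,\mathbb{Z}_p)$ via the quotient to $\mathbb{Z}_p^{\,n}\rtimes\Sigma_n$ (the paper packages this as a Five Lemma between two short exact sequences), and both bring in the fibration $\pi^{n-1}_n$ together with the preceding $\pi_2$-vanishing lemma. But the core argument is organised differently. The paper does \emph{not} induct downward on $n$ and does \emph{not} isolate a point-pushing step. Instead it goes \emph{up} by one strand: it observes that the conjugation action of $P_n\subset P_{n+1}$ on the free normal subgroup $U_{n+1}=\ker(P_{n+1}\to P_n)$ is, after an explicit identification $f:U_{n+1}\to F_{pn}$ (which collapses the origin-winding generator $A_0$), exactly $\rho_F|_{P_n}$. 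Hence $\ker(\rho_F|_{P_n})$ is the centraliser of $U_{n+1}$ in $P_n$. The fibration tower is used only to produce the unique normal form $a=a_2\cdots a_n$ with $a_j\in U_j$; a short free-group computation (conjugate the top factor $a_i$ by $\pi=b_0\cdots b_{i-1}$ to land in $U_{n+1}$, then use that centralisers in a free group are cyclic) finishes the job. This is Birman's ``add a strand and centralise'' trick transplanted to the orbit setting: no induction hypothesis, no separate base case, and no mapping-class-group input.

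Your Birman-exact-sequence route is a legitimate alternative, but the step you rightly flag as the obstacle --- faithfulness of $\rho_F|_\Phi$ --- is precisely what the paper's manoeuvre sidesteps. If you pursue your plan you will need to make the orbifold/covering sketch rigorous; note in particular that $\mathbb{Z}_p$ acts \emph{freely} on $\mathbb{C}^\times$, so the quotient by $z\mapsto z^p$ is an honest manifold and the group $F_n*\mathbb{Z}/p$ you invoke does not naturally appear there --- the relevant downstairs group is just $\pi_1$ of a punctured plane. One small inaccuracy worth fixing: $\rho_F(b^p)$ is not a single inner automorphism of $F_{pn}$; it fixes each $x_{i0}$ and conjugates $x_{ij}$ (for $j>0$) by the $i$-dependent cyclic word $\prod_{k=i}^{i+p-1}x_{k\bmod p,0}$. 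All you actually need is $\rho_F(b^p)\neq\id$, which does hold.
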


\begin{proof}
   \cite{Li2019} has proved that
\[ 1 \rightarrow P_n(\mathbb{C}^{\times},\mathbb{Z}_p) \rightarrow 
B^{orb}_n(\mathbb{C}^{\times},\mathbb{Z}_p) \rightarrow \mathbb{Z}_p^{\times n} \rtimes \Sigma_n \rightarrow 1
\]
is a short exact sequence. Define $Aut_{\mathbb{Z}_p}F_n \subset AutF_n $ such that  every $h \in Aut_{\mathbb{Z}_p}F_n$ satisfies
\[
c(h(x_{ij})) = h(x_{i+1(\bmod p), j})
\]
where $c \in Aut F_{pn} $ subjects to $c(x_{ij}) = x_{i+1(\bmod p), j}$. Now we have two short exact rows and they form a commutative diagram
\[
\xymatrix{
P_n(\mathbb{C}^{\times},\mathbb{Z}_p) \ar[r] \ar[d]^{\rho_F |_{ P_n(\mathbb{C}^{\times},\mathbb{Z}_p) }} &
B^{orb}_n(\mathbb{C}^{\times},\mathbb{Z}_p) \ar[r] \ar[d]^{\rho_F} & \mathbb{Z}_p^{\times n} \rtimes \Sigma_n \ar[d]^{\cong}\\
\text{ker}\rho \ar[r] & Aut_{\mathbb{Z}_p}F_n \ar[r] & Aut_{\mathbb{Z}_p}(F_n/[F_n,F_n])}
\]
By Five Lemma we just need to check that $\rho_F |_{ P_n(\mathbb{C}^{\times},\mathbb{Z}_p) }$ is monomorphism.

With the idea that the central point should be regarded as a fixed vertical string, We could easily see that the group      
  $P_n(\mathbb{C}^{\times},\mathbb{Z}_p)$ has generators in
  \autoref{fig:generators}
  \begin{gather*}
    \{A_{iqj}= \{(b_{i-1}\cdots b_0)b (b_0 \cdots b_{i-1} )\}^q b_i \cdots b_{j-1}   
    b_j^2 
    b_{j-1}^{-1} \cdots b_i^{-1} \{(b_{i-1}\cdots b_0) b(b_0 \cdots b_{i-1} )\}^{-q}: 
    \\0\leq i < j \leq n -1, 0 \leq q \leq p-1, \\ A_i = (b_{i-1}\cdots b_0 b b_0 \cdots b_{i-1})^p: 0\leq i \leq n -1\}
  \end{gather*}
\begin{figure}[hbt]
    \centering
    \includegraphics[scale = 0.3]{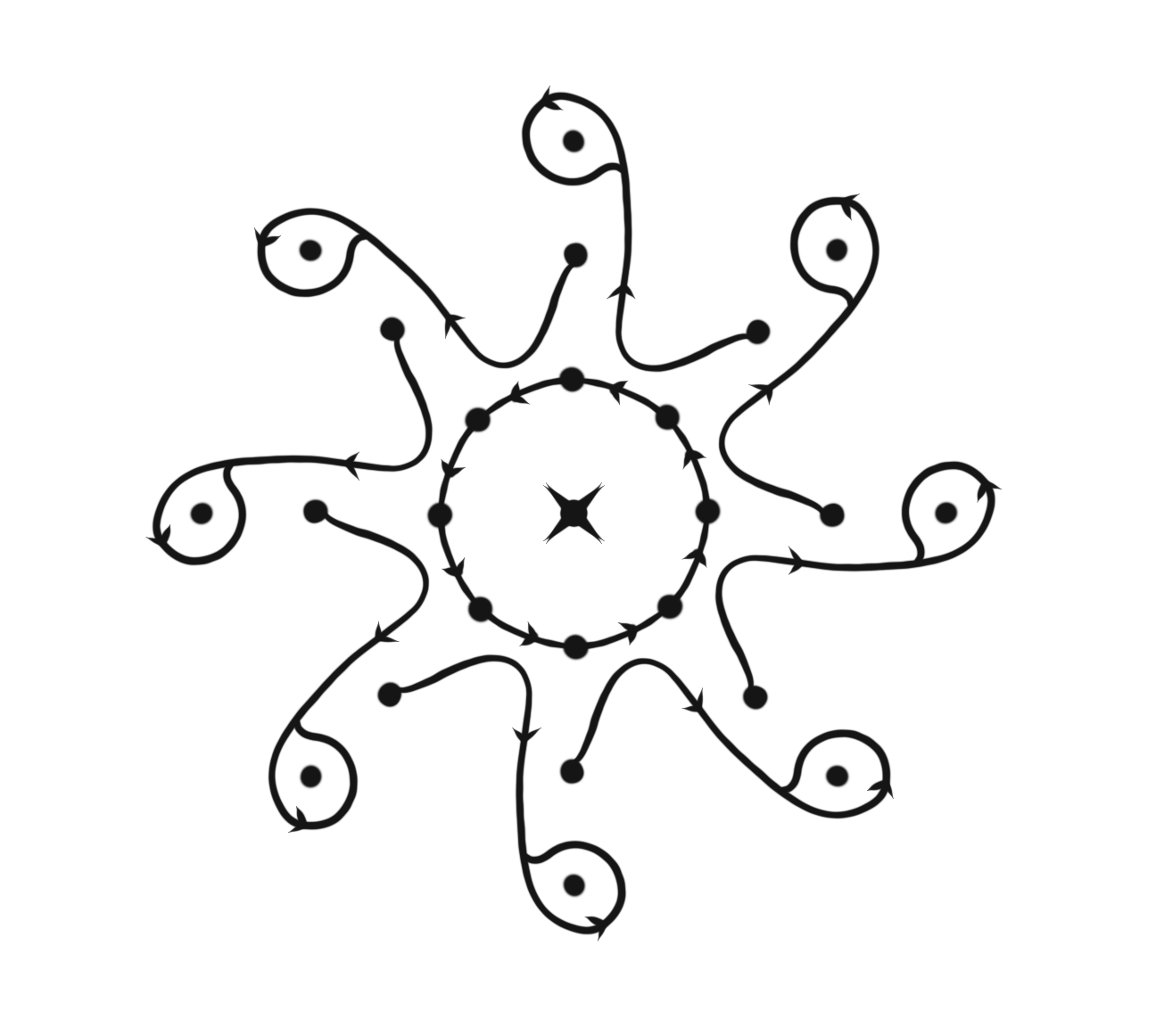}
    \caption{$A_{0,1,1}=b_0bb_0b_1^2(b_0bb_0)^{-1}$ and $A_0 = b^p$} 
    \label{fig:generators}
  \end{figure}

Note that $P_{n-1}(\mathbb{C}^{\times},\mathbb{Z}_p) = \{ A_{iqj}:0< i<j \leq n -1, 0 \leq q \leq p-1; A_i: i >0 \}$ is a subgroup of  $P_n(\mathbb{C}^{\times},\mathbb{Z}_p)$. Define a projection
\begin{align*}
\epsilon :P_{n}(\mathbb{C}^{\times},\mathbb{Z}_p) &\rightarrow 
P_{n-1}(\mathbb{C}^{\times},\mathbb{Z}_p)\\
A_{iqj}  &\mapsto 
\begin{cases} A_{iqj} & if \ 0< i < j \leq n -1 \\
1& if  \  1\leq j \leq n -1, i = 0 \end{cases}\\
A_{i}  &\mapsto 
\begin{cases} A_{i} & if \ 0< i \leq n -1 \\
1& if  \  i = 0 \end{cases}
\end{align*}
According to \cite{Li2019}, there is an isomorphism $l_n:P_n(\mathbb{C}^{\times},\mathbb{Z}_p) \rightarrow \pi_1(F_{\mathbb{Z}_p}(\mathbb{C}^{\times},n),\mathbf{x})$.

Now we have a commutative diagram
\[ 
\xymatrix{
1 \ar[r] \ar[d]^{id} &U_{n}(\mathbb{C}^{\times},\mathbb{Z}_p) = ker\epsilon 
\ar[r] \ar[d]^{l_n|_{ker \epsilon}} & P_{n}(\mathbb{C}^{\times},\mathbb{Z}_p)
\ar[r]^{\epsilon} \ar[d]^{l_n} &  P_{n-1}(\mathbb{C}^{\times},\mathbb{Z}_p)
\ar[r] \ar[d]^{l_{n-1}} & 1 \ar[d]^{id}\\
1 \ar[r] & \pi_1(F^{n-1}_{\mathbb{Z}_p}\mathbb{C}^{\times}) \ar[r] &
\pi_1(F_{\mathbb{Z}_p}(\mathbb{C}^{\times},n)) \ar[r]^(0.46){\pi_1(\pi^{n-1}_{n})} & 
\pi_1(F_{\mathbb{Z}_p}(\mathbb{C}^{\times},n-1)) \ar[r] & 1 \\
}
\]
where the rows are exact, and $\pi_1(F^{n-1}_{\mathbb{Z}_p}\mathbb{C}^{\times}) = \pi_1(\mathbb{C} \setminus Q_{p(n-1)+1})$. By Five Lemma, $ker \epsilon$ is isomorphic to $\pi_1(F^{n-1}_{\mathbb{Z}_p}\mathbb{C}^{\times})$.
This means that $U_{n}(\mathbb{C}^{\times},\mathbb{Z}_p)$ is a free group with a free basis $\{A_0,A_{0qj}:0< j \leq n -1, 0 \leq q \leq p-1\}$ since $\{l_n(A_{0},l_n(A_{0qj}:0< j \leq n -1, 0 \leq q \leq p-1\}$ is a free basis of  
$\pi_1(F^{n-1}_{\mathbb{Z}_p}\mathbb{C}^{\times})$.

Intuitively, we could examine what $A_{i(qj)}A_{0rk}A_{i(qj)}^{-1}$ is, to see why $ker\epsilon$ is generated by $\{A_0,A_{0qj}:0< j \leq n -1, 0 \leq q \leq p-1\}$. Just shift the loop of $A_{0rk}$ down through the loop of $A_{i(qj)}$, during which the initial point and end point of the first loop is maintained so that it can always been presented as composition of $A_{0ab}$ and $A_0$. Than we just reduce the loops of $A_{i(qj)}$ and $A_{i(qj)}^{-1}$ to identity. For example, $A_{1}A_{001}
= (A_0A_{001}A_0^{-1})A_{1}$. Other cases could be sorted and handled by induction. As a result, every element $a \in P_{n}(\mathbb{C}^{\times},\mathbb{Z}_p)$ can be represented uniquely as the normal form 
\[
a = a_2 \cdots a_n
\]
with $a_j \in U_{j}(\mathbb{C}^{\times},\mathbb{Z}_p)$.

We have seen that $P_{n}(\mathbb{C}^{\times},\mathbb{Z}_p)$ acts by conjugation as a group of automorphisms of $U_{n+1}(\mathbb{C}^{\times},\mathbb{Z}_p)$. The automorphisms of fundamental groups could be regarded as the transformations of the loops of $U_{n+1}(\mathbb{C}^{\times},\mathbb{Z}_p)$, as the same fashion discussed above. Thus if we define $f: U_{n+1}(\mathbb{C}^{\times},\mathbb{Z}_p) \rightarrow F_n$ by $f(A_{0qj}) = x_{qj}(0\leq q \leq p-1 , 0< j \leq n-1),f(A_0) = 0$, the conjugate action would make the following diagram commutative:
\[
\xymatrix{
P_{n}(\mathbb{C}^{\times},\mathbb{Z}_p) \ar[r]^(0.4){c} \ar[d]^{id} &
AutU_{n+1}(\mathbb{C}^{\times},\mathbb{Z}_p) \ar[d]^{Aut(f)} \\
P_{n}(\mathbb{C}^{\times},\mathbb{Z}_p)\ar[r]^{\rho_F|_{P_{n}(\mathbb{C}^{\times},\mathbb{Z}_p)}} &
Aut_{\mathbb{Z}_p}F_n }.
\]
Because the image of $c$ leaves $A_0$ constant, we conclude that ker$\rho_F|_{P_{n}(\mathbb{C}^{\times},\mathbb{Z}_p)}$ is the subgroup of elements in $P_{n}(\mathbb{C}^{\times},\mathbb{Z}_p)$ which commute with $U_{n+1}(\mathbb{C}^{\times},\mathbb{Z}_p)$.

Assume that there is an element $a \in \text{ker} \rho_F|_{P_{n}(\mathbb{C}^{\times},\mathbb{Z}_p)}$ such that $a = a_2 \cdots a_i$. Then $a_i$ commute with $A_{00i}$. Let $\pi = b_0b_1\cdots b_{i-1}$. We have
\begin{align*}
\pi A_{00i} \pi^{-1} &= A_{001}\\
a_i A_{00i} a_i^{-1} &= A_{00i}\\
(\pi a_i \pi^{-1})A_{001}(\pi a_i \pi^{-1})^{-1} &= (\pi a_i \pi^{-1})(\pi A_{00i} \pi^{-1} )(\pi a_i \pi^{-1})^{-1}\\
&= \pi a_i A_{00i} a_i^{-1} \pi^{-1} \\
&= \pi A_{00i}\pi^{-1}\\
&= A_{001}
\end{align*}
However, $\pi a_i \pi^{-1}$ is in $U_{n+1}(\mathbb{C}^{\times},\mathbb{Z}_p)$. It must be $A_{001}^l$ since it commutes with $A_{001}$. Then $a_i = \pi^{-1} A_{001}^l\pi = A_{00i}^l$, which is a contradiction.

Now we consider $\rho_R$.
\begin{theorem}\label{thm:faithful}
 $\rho_R$ is faithful.
\end{theorem}
 If $\beta \in B^{orb}_n(\mathbb{C},\mathbb{Z}_p)$ and $\rho_R(\beta) = id$, and $\beta'$ is the corresponding element in $B^{orb}_n(\mathbb{C}^{\times},\mathbb{Z}_p)$, then
$\rho_F(\beta')(x_{ij}) = A_{ij} x_{\mu_{ij}} A_{ij}^{-1} $ must be $x_{ij}$ in $R_{pn}$. 
Because when the relation of $R_{pn}$ is reduced, the odevity of words is kept, $\mu_{ij}$ must be $ij$. If $ j > 0$, $A_{ij}$ must be multiple of $(\prod\limits_{k=i}^{i+p-1}x_{k \bmod p,0})$ since other words don't commute with $x_{ij}$. If $j = 0$, since the words on the right and the left of $x_{ij}$ are symmetric, $x_{ij}$ cannot commute with other words as a part of $(\prod\limits_{k=i}^{i+p-1}x_{k \bmod p,0})$. Thus $A_{i0}$ must be $1$.

Note that 
\begin{align*}
\rho_F(b) (\prod\limits_{i=0}^{p-1}(\prod\limits_{j=n-1}^{0} x_{ij})) &= 
(x_{00})^{-1} \prod\limits_{i=0}^{p-1}(\prod\limits_{j=n-1}^{0} x_{ij}) x_{00}\\
\rho_F(b_k) (\prod\limits_{i=0}^{p-1}(\prod\limits_{j=n-1}^{0} x_{ij}))&= 
\prod\limits_{i=0}^{p-1}(\prod\limits_{j=n-1}^{0} x_{ij})  .
\end{align*}
Thus
\[
\rho_F(\beta') (\prod\limits_{i=0}^{p-1}(\prod\limits_{j=n-1}^{0} x_{ij})) = 
A \prod\limits_{i=0}^{p-1}(\prod\limits_{j=n-1}^{0} x_{ij}) A^{-1} .
\]
Thus $A_{ij}$ would contain the same number of $(\prod\limits_{k=i}^{i+p-1}x_{k \bmod p,0})$. Suppose $A_{ij} = (\prod\limits_{k=i}^{i+p-1}x_{k \bmod p,0})^m$. In conclusion, $\rho_F(\beta')$ would act just as $\rho_F((b^p)^m)$. Because we have proved $\rho_F$ is faithful, $\beta'$ thus $\beta$ must be $(b^p)^m$. So $\beta$ is identity in $B^{orb}_n(\mathbb{C},\mathbb{Z}_p)$.
\end{proof}

\section{characterizations of orbit braid representation}
\begin{lemma}
 If $ \rho \in Aut R_{pn} \setminus \{ id_{R_{pn}} \}$:  (1)could be described as a simplest form:
\begin{equation}\label{equ:standard}
\rho (x_{ij}) = A_{ij} x_{\mu_{ij}} A_{ij}^{-1}   % \tag{2.1.1}
\end{equation}
where $(i,j) \mapsto \mu_{ij}$ is a permutation, and (2)
\begin{equation}\label{equ:conjugation}
\rho (\prod\limits_{i=0}^{p-1}(\prod\limits_{j=n-1}^{0} x_{ij})) = 
A \prod\limits_{i=0}^{p-1}(\prod\limits_{j=n-1}^{0} x_{ij}) A^{-1}  % \tag{2.2.1}
\end{equation}
and (3)
\begin{equation}\label{equ:circle}
c(A_{ij}) = A_{i+1(\bmod p), j}  % \tag{2.3.1}
\end{equation}
\begin{equation}\label{equ:circle2}
c(x_{\mu_{ij}}) = x_{\mu_{i+1(\bmod p), j}}  %\tag{2.3.2}
\end{equation}
where $c \in Aut R_{pn} $ subjects to $c(x_{ij}) = x_{i+1(\bmod p), j}$.
Then either (a) there is a pair $(i,j)$ such that $x_{\mu_{ij}}A_{ij}^{-1}$
could be absorbed by $A_{f(ij)}$; or (b) there is a pair $(i,j)$ 
such that  $A_{ij}^{-1}$ could absorb $A_{f(ij)}x_{\mu_{f(ij)}}$, 
where ${f(ij)}$ is the pair following $(i,j)$ in the order of (2.2.1).
\label{lem:absorb}
\end{lemma}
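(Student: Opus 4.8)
The plan is to argue by contradiction, in the spirit of the cancellation lemma at the heart of Artin's 1925 paper. Suppose $\rho\neq\mathrm{id}$ satisfies \eqref{equ:standard}, \eqref{equ:conjugation}, \eqref{equ:circle} and \eqref{equ:circle2}, but that \emph{neither} (a) \emph{nor} (b) occurs: for every index pair $(i,j)$ the word $A_{ij}x_{\mu_{ij}}^{-1}$ is not a prefix of $A_{f(ij)}$ and $A_{f(ij)}x_{\mu_{f(ij)}}$ is not a prefix of $A_{ij}$, where $f(ij)$ denotes the index pair immediately following $(i,j)$ in the cyclic order in which the generators occur in \eqref{equ:conjugation}. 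I want to derive a contradiction with \eqref{equ:conjugation}. Write $W=\prod_{i=0}^{p-1}\bigl(\prod_{j=n-1}^{0}x_{ij}\bigr)$; by \eqref{equ:standard},
\[
\rho(W)=\prod_{i=0}^{p-1}\Bigl(\prod_{j=n-1}^{0}A_{ij}\,x_{\mu_{ij}}\,A_{ij}^{-1}\Bigr),
\]
the product taken in the cyclic order of $W$, while by \eqref{equ:conjugation} this element is conjugate to $W$, whose conjugacy class has length $pn$ (each generator occurs once, since $(i,j)\mapsto\mu_{ij}$ is a permutation). Setting $\ell_{ij}=|A_{ij}|$, the displayed word has naive length $pn+2\sum_{i,j}\ell_{ij}$, so an amount $\geq 2\sum_{i,j}\ell_{ij}$ of this must be destroyed by reduction modulo conjugation; the whole point is to show the ``no absorption'' hypothesis prevents this unless every $A_{ij}$ is trivial.

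First I would localize the analysis to the junctions. Between the block of $(i,j)$ and the block of $f(ij)$ one reads $\cdots x_{\mu_{ij}}\,A_{ij}^{-1}A_{f(ij)}\,x_{\mu_{f(ij)}}\cdots$; after the free cancellation of $A_{ij}^{-1}$ against $A_{f(ij)}$ the two \emph{distinct} generators $x_{\mu_{ij}}$ and $x_{\mu_{f(ij)}}$ cannot cancel each other, and the failure of (a) (resp.\ (b)) says exactly that $x_{\mu_{ij}}$ (resp.\ $x_{\mu_{f(ij)}}$) is not swallowed by the surviving remnant of the adjacent $A$. Hence every block contributes at least its generator $x_{\mu_{ij}}$ to the reduced word permanently. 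To see that the leftover pieces of the $A_{ij}$ cannot all later vanish through some more global cancellation, I would pick an extremal junction — one at which an undeleted segment of an $A$ is longest — and follow that segment, as in Artin's argument, to a contradiction; the cyclic symmetry \eqref{equ:circle}, \eqref{equ:circle2} cuts the bookkeeping from $pn$ junctions down to roughly $n$, since whatever occurs at $(i,j)$ occurs simultaneously at every $(i',j)$.

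The output of this count is $\sum_{i,j}\ell_{ij}=0$, i.e.\ all $A_{ij}=1$, so that $\rho$ merely permutes the generators by $\mu$. Then \eqref{equ:conjugation} forces the permuted word to be a cyclic rotation of $W$, and \eqref{equ:circle2} forces $\mu$ to commute with the first‑index shift $c$; a short direct check then gives $\mu=\mathrm{id}$, hence $\rho=\mathrm{id}$, contradicting $\rho\neq\mathrm{id}$. This finishes the contradiction and hence the lemma.

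The step I expect to be the main obstacle is that $R_{pn}$ is \emph{not} free: its defining relation lets a conjugate of $P_i=x_{i0}x_{(i+1)\bmod p,0}\cdots x_{(i-1)\bmod p,0}$ slide past any $x_{ij}$ with $j>0$, so free cancellation at the junctions is not the only mechanism by which the long product can shrink, and ``conjugate to $W$, of length $pn$'' only makes sense up to these extra moves. I plan to control this exactly as in the proof of Theorem \ref{thm:faithful}: the parity (``odevity'') of the syllable length is invariant under the relator $P_i x_{ij}P_i^{-1}x_{ij}^{-1}$, so a relator application can shorten a word only when a \emph{whole} $P_i$‑block stands next to the matching $x_{ij}$; and such a configuration appearing inside the reduced product is itself an instance of absorption (a) or (b), or else witnesses that the $A_{ij}$ involved was not in the ``simplest form'' demanded in \eqref{equ:standard}. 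Making this dichotomy precise — and checking that no chain of $P_i$‑commutations can manufacture a short word out of a genuinely long one without triggering (a) or (b) — is the crux; once the free‑group heuristics are justified inside $R_{pn}$, the length count above is routine.
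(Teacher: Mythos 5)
There is a genuine gap, in fact two. First, your proposal defers exactly the step that carries the content of the lemma. The whole difficulty is to show that, inside $R_{pn}$ (which is not free), the reduction of $\prod_{i,j}A_{ij}x_{\mu_{ij}}A_{ij}^{-1}$ to a conjugate of the full product $W=\prod_{i=0}^{p-1}\bigl(\prod_{j=n-1}^{0}x_{ij}\bigr)$ can only be achieved through the local absorptions named in (a) and (b); you gesture at an extremal-junction argument ``as in Artin'' and then explicitly label the interaction with the relators as ``the crux'' to be checked later. That deferred step is the lemma, not a routine verification. The paper's own proof supplies a concrete mechanism in its place: it rewrites condition (2) as $\prod_{i,j}\bigl(A^{-1}A_{ij}x_{\mu_{ij}}A_{ij}^{-1}A\bigr)=x_{0,n-1}x_{0,n-2}\cdots x_{p-1,0}$, then moves $x_{0,n-1}^{-1}$ (and afterwards $x_{0,n-2}^{-1}$, etc.) across the parenthesized blocks one at a time and tracks where each such letter must be absorbed; every possible terminus of this transfer is shown to yield either the clean outcome $x_{\mu_{ij}}=x_{ij}$, $A_{ij}=1$ for all $(i,j)$ (whence $\rho=\mathrm{id}$, excluded by hypothesis) or an instance of (a) or (b). Your global length count by contradiction is a legitimate alternative framing, but without that junction-by-junction bookkeeping it does not yet establish anything.

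Second, and independently, your terminal step fails. From ``all $A_{ij}=1$'' together with \eqref{equ:conjugation} and \eqref{equ:circle2} you claim a short check gives $\mu=\mathrm{id}$. It does not: the index shifts $\mu_{ij}=((i+k)\bmod p,\,j)$ make $\prod_{i,j} x_{\mu_{ij}}$ a cyclic rotation of $W$ by $kn$ letters (hence conjugate to $W$) and commute with $c$, so they satisfy all your constraints with every $A_{ij}$ trivial while $\rho\neq\mathrm{id}$; your contradiction does not close in this case, and neither (a) nor (b) is produced. (This also exposes a soft spot in the lemma as stated---the automorphism $c$ itself appears to satisfy conditions (1)--(3) with trivial $A_{ij}$---but whatever convention rescues the statement, your proof must treat this case explicitly rather than assert $\mu=\mathrm{id}$.)
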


\begin{remark}\label{rem:action}
These three conditions are from the geometrical perspective. Every homeomorphisms on the disk would introduce the homomorphisms of fundamental group with the form of condition(1).  

$\prod\limits_{i=0}^{p-1}(\prod\limits_{j=n-1}^{0} x_{ij})$ is a loop starts at 
$(0,0)$ and encloses all the points and then return to $(0,0)$. Every
$\rho_R(b_k)$ would leave this loop fixed and $\rho_R(b) $ would make it be a conjugation. Thus we have condition(2).

Every G-homomorphisms under $\mathbb{Z}_p$-action would lead to condition(3).
\end{remark}

\begin{proof}
We have 
$$\prod\limits_{i=0}^{p-1}(\prod\limits_{j=n-1}^{0}  A_{ij} x_{\mu_{ij}} A_{ij}^{-1})  =  A \prod\limits_{i=0}^{p-1}(\prod\limits_{j=n-1}^{0} x_{ij}) A^{-1}$$
Thus
$$\prod\limits_{i=0}^{p-1}(\prod\limits_{j=n-1}^{0}  A ^{-1} A_{ij}  x_{\mu_{ij}} A_{ij}^{-1}A)  =  \prod\limits_{i=0}^{p-1}(\prod\limits_{j=n-1}^{0} x_{ij}) = x_{0, n-1} x_{0, n-2} \cdots x_{p-1 , 0}$$

We move $x_{\mu_{0,n-1}}$ from the right to the left:
\begin{equation*}\label{equ:move}
 x_{0, n-1}^{-1} ( A ^{-1} A_{0, n-1}  x_{\mu_{0, n-1}} A_{0, n-1}^{-1}A) \cdots
 (A ^{-1} A_{p-1,0}  x_{\mu_{p-1,0}} A_{p-1,0}^{-1}A) =
 x_{0, n-2} \cdots x_{p-1 , 0}  \tag{$\star$}
\end{equation*}
The $x_{0, n-1}^{-1}$ must be absorbed. Since there is equal amount(could be negtive) of $x_{0, n-1}$  before and after each $x_{ij}$ in each parenthesis, the $x_{0, n-1}^{-1}$ must be "transferred" alone these parentheses and eventually absorbed by a $x_{\mu_{ij}}$ that is $x_{0, n-1}$.

Suppose $x_{\mu_{0,n-1}}$ is $x_{0, n-1}$, then the transfer progress should be ended in the first parenthesis, because if $ A ^{-1} A_{0, n-1}$ has a multiple of $x_{0, n-1}$, then so is  $A_{0, n-1}^{-1}A$, which elements in subsequent parenthesis could not absorb. Thus  $x_{0, n-1}^{-1}$ would commute with $ A ^{-1} A_{0, n-1}$, and is absorbed by $x_{\mu_{0,n-1}}$.Therefore  $ A ^{-1}$ is multiple of $(\prod\limits_{k=0}^{0+p-1}x_{k \bmod p,0})$. Now we move the $x_{0, n-2}$ from the right to the left. Eventually we would be in two cases:

(i)The process depicted above can be done till the end. Every $A ^{-1} A_{ij}  x_{\mu_{ij}} A_{ij}^{-1}A$ is equal to $x_{ij}$. Then $x_{\mu_{ij}} $ is $x_{ij}$, and each $A ^{-1} A_{ij}$ could be reduce to identity. By \eqref{equ:circle} , $A $ and $A_{ij}$ are both identity. Thus $\rho$ is identity, which contradicts what we'd assumed.

(ii) If it's not case(i), we can always assume that $x_{\mu_{0,n-2}}$ is $x_{0, n-1}$ in \eqref{equ:move}. Since $x_{\mu_{0,n-1}}$ is not $x_{0, n-1}$, we have two ways to reduce  $x_{0, n-1}$ : 
either $x_{\mu_{0,n-1}} A_{0,n-1}^{-1}A $ is absorbed by $A ^{-1} A_{0, n-2} $, which implies (a) is true; 
or there is exactly one $x_{0, n-1}$ in $ A ^{-1} A_{0, n-1}$ which can absorb $x_{0, n-1}^{-1}$, such that $x_{0, n-1}^{-1}$ could "jump over" $x_{\mu_{0,n-1}}$, then reduce $x_{\mu_{0,n-2}}$. This means (b) is true.\end{proof}

\begin{lemma}\label{lem:length}
Under the conditions and notations of Lemma 2, if $\rho$ satisfies Lemma 2(a), 
\[ l(\rho \circ \rho_R(b_{j-1})) < l( \rho ) \ \text{or} \ 
 l(\rho \circ \rho_R(b^{-1})) < l( \rho ) ,\]
and if $\rho$ satisfies Lemma 2(b), 
\[ l(\rho \circ \rho_R(b_{j-1}^{-1})) < l( \rho ) \ \text{or} \ 
 l(\rho \circ \rho_R(b)) < l( \rho ) ,\]
where $l$ is length function, i.e., $l(\rho) = \sum\limits_{i,j} (\text{minimum letter lengths of the words } A_{ij} x_{\mu_{ij}} A_{ij}^{-1})$.

\end{lemma}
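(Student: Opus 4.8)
The plan is to run Artin's length-reduction step in this equivariant setting: once Lemma~\ref{lem:absorb} produces an absorption, composing $\rho$ on the right with the appropriate generator of $B^{orb}_n(\mathbb{C},\mathbb{Z}_p)$ should strictly lower $l$, so that one recovers $\rho=\rho_R(\beta)$ by downward induction on $l$. Write $W_{ij}:=A_{ij}x_{\mu_{ij}}A_{ij}^{-1}$ for the reduced words realizing $\rho$ in its simplest form, so $l(\rho)=\sum_{i,j}(2|A_{ij}|+1)$; since a composite of two automorphisms of the shape \eqref{equ:standard} is again of that shape, $l(\rho\circ\rho_R(g))$ is well defined for each generator $g$, and is read off by substituting the formulas of Definition~\ref{def:repre} into $\rho(\rho_R(g)(x_{ij}))$ and freely reducing.

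First I would restate the two alternatives of Lemma~\ref{lem:absorb} as identities of reduced words. Case (a) at a pair $(i,j)$ says exactly that $A_{f(ij)}$ begins with $A_{ij}x_{\mu_{ij}}^{-1}$, i.e. $A_{f(ij)}=A_{ij}x_{\mu_{ij}}^{-1}C_{ij}$ with no cancellation, so $|A_{f(ij)}|=|A_{ij}|+1+|C_{ij}|$; by the equivariance \eqref{equ:circle}--\eqref{equ:circle2}, applying the letter-permuting automorphism $c$ repeatedly, the same identity holds simultaneously in every row. Dually, case (b) says $A_{ij}=A_{f(ij)}x_{\mu_{f(ij)}}D_{ij}$ with no cancellation and $|A_{ij}|=|A_{f(ij)}|+1+|D_{ij}|$, again uniformly in the row index.

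For the columns $j\ge1$ one has $f(ij)=(i,j-1)$, and $\rho_R(b_{j-1})$ touches only columns $j-1$ and $j$. In case (a), $\rho\circ\rho_R(b_{j-1})$ sends $x_{i,j-1}\mapsto W_{ij}$, $x_{ij}\mapsto W_{ij}W_{i,j-1}W_{ij}^{-1}$, the rest fixed; substituting $A_{i,j-1}=A_{ij}x_{\mu_{ij}}^{-1}C_{ij}$ the two copies of $x_{\mu_{ij}}^{\pm1}$ telescope and the second word reduces to $A_{ij}C_{ij}x_{\mu_{i,j-1}}C_{ij}^{-1}A_{ij}^{-1}$, of length $\le 2(|A_{ij}|+|C_{ij}|)+1=2|A_{i,j-1}|-1$. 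Thus in each row the two columns contribute at most $(2|A_{ij}|+1)+(2|A_{i,j-1}|-1)$ instead of $(2|A_{i,j-1}|+1)+(2|A_{ij}|+1)$, a drop of $2$; summing over the $p$ rows gives $l(\rho\circ\rho_R(b_{j-1}))\le l(\rho)-2p<l(\rho)$. Case (b), $j\ge1$, is symmetric with $\rho_R(b_{j-1}^{-1})$, which sends $x_{ij}\mapsto W_{i,j-1}$, $x_{i,j-1}\mapsto W_{i,j-1}^{-1}W_{ij}W_{i,j-1}$: using $A_{ij}=A_{i,j-1}x_{\mu_{i,j-1}}D_{ij}$ the last word collapses to $A_{i,j-1}D_{ij}x_{\mu_{ij}}D_{ij}^{-1}A_{i,j-1}^{-1}$ of length $\le 2|A_{ij}|-1$, and the identical count yields $l(\rho\circ\rho_R(b_{j-1}^{-1}))\le l(\rho)-2p$.

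The remaining case $j=0$ is the delicate one, and is the main obstacle. Here $f(i,0)=(i+1,n-1)$ with indices mod $p$, there is no $b_{-1}$, and one must use $\rho_R(b^{-1})$ in case (a) and $\rho_R(b)$ in case (b). By Definition~\ref{def:repre}, $\rho_R(b^{-1})$ cyclically permutes the column-$0$ generators, so column $0$ contributes the same total length before and after, and sends $x_{i,j}\mapsto x_{i-1,0}x_{i,j}x_{i-1,0}^{-1}$ for $j\ne0$; after composing with $\rho$ the word at $(i,n-1)$ becomes $W_{i-1,0}W_{i,n-1}W_{i-1,0}^{-1}$, and the block-boundary absorption at $(i-1,0)$, namely $A_{i,n-1}=A_{i-1,0}x_{\mu_{i-1,0}}^{-1}C_{i-1}$, collapses it exactly as in the previous paragraph, dropping $2$ per row. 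The genuine difficulty is that $\rho_R(b^{\pm1})$ is not supported on two columns: it also conjugates every middle generator $x_{i,j}$, $1\le j\le n-2$, by $x_{i\mp1,0}$, which a priori lengthens those words by the conjugator. My plan is to show that the initial segment $A_{i-1,0}x_{\mu_{i-1,0}}^{-1}$ handed over at the block boundary is in fact an initial segment of every $A_{i,j}$ --- intuitively, the conjugating path of each ring-$i$ loop starts by winding through the inner part of the disk in the same way --- which I expect to extract from the conjugation identity \eqref{equ:conjugation} restricted to the $i$-th block (where $\prod_{j=n-1}^{0}W_{i,j}$ must telescope onto $x_{i,n-1}\cdots x_{i,0}$ up to the overall conjugator), or directly from the bookkeeping already carried out inside the proof of Lemma~\ref{lem:absorb}. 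Granting this propagation, every middle column also collapses and the net decrease becomes at least $2p(n-1)$; the case (b) count with $\rho_R(b)$ should then follow by the mirror argument. Verifying this propagation, and checking that it makes both $j=0$ counts close, is the one step beyond routine free-reduction bookkeeping and is where I expect the real work to lie.
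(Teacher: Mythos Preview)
Your approach is essentially the paper's: the case $j\ge1$ is literally its Case (a-i), with the same telescoping computation, and your symmetric treatment of (b) matches the paper's ``similar'' dismissal.

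The interesting point is the boundary case $j=0$. You correctly observe that $\rho_R(b^{\pm1})$ conjugates \emph{every} column $j\ne0$, not just column $n-1$, and you flag as the main obstacle the question of why those middle columns do not grow. The paper's Case (a-ii) does \emph{not} address this: it only displays the images of $x_{i,0}$ and $x_{i+1,n-1}$ under $\rho\circ\rho_R(b^{-1})$, observes that column $0$ is merely permuted and column $n-1$ drops by $2$ per row, and then asserts $l(\rho\circ\rho_R(b^{-1}))<l(\rho)$ without mentioning columns $1,\dots,n-2$ at all. So the gap you identify is shared by the paper's own proof, and your ``propagation'' plan---showing that the initial segment $A_{i-1,0}x_{\mu_{i-1,0}}^{-1}$ handed over at the block boundary in fact prefixes every $A_{i,j}$, via the telescoping forced by \eqref{equ:conjugation}---is a genuine attempt to close it that goes beyond what the paper provides. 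That step is plausible (it is exactly what one sees for $\rho=\rho_R(b)$ itself), but neither you nor the paper carries it out; if you want a complete proof you will have to establish it.
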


\begin{proof}
Case(a-i): There is a $j$ such that $x_{\mu_{ij}}A_{ij}^{-1}$
could be absorbed by $A_{i,j-1}$, which means $A_{i,j-1} = A_{ij}x_{\mu_{ij}}^{-1}B_{i,j-1} $ for any $0 \leq i \leq p-1$ by Lemma 2 Condition(3). We have
\begin{align*}
x_{ij}&\stackrel{\rho}{\mapsto} A_{ij}  x_{\mu_{ij}} A_{ij}^{-1}\\
x_{i, j-1}&\stackrel{\rho}{\mapsto} (A_{ij}x_{\mu_{ij}}^{-1}B_{i,j-1})x_{\mu_{i,j-1}}(A_{ij}x_{\mu_{ij}}^{-1}B_{i,j-1})^{-1}\\
x_{ij}&\stackrel{\rho_R({b_{j-1}})}{\mapsto} x_{ij} x_{i,j-1} x_{ij}^{-1}\\ &\stackrel{\rho}{\mapsto} (A_{ij}  x_{\mu_{ij}} A_{ij}^{-1})
(A_{ij}x_{\mu_{ij}}^{-1}B_{i,j-1})x_{\mu_{i,j-1}}(A_{ij}x_{\mu_{ij}}^{-1}B_{i,j-1})^{-1}  (A_{ij}  x_{\mu_{ij}} A_{ij}^{-1})^{-1} \\
&=A_{ij}B_{i,j-1} x_{\mu_{i,j-1}}B_{i,j-1}^{-1}A_{ij}^{-1}\\
x_{i,j-1}&\stackrel{\rho_R({b_{j-1}})}{\mapsto}x_{ij}\\
&\stackrel{\rho}{\mapsto} A_{ij}  x_{\mu_{ij}} A_{ij}^{-1}
\end{align*}
for any $0 \leq i \leq p-1$. Thus by comparison we get $l(\rho \circ \rho_R(b_{j-1})) < l( \rho ) $.

Case(a-ii): $x_{\mu_{i0}}A_{i0}^{-1}$
could be absorbed by $A_{i + 1,n-1}$, which means $A_{i+1,n-1} = A_{i0}x_{\mu_{i0}}^{-1}B_{i+1,n-1} $ for any $0 \leq i \leq p-2$ by Lemma 2 Condition(3). We have
\begin{align*}
x_{i0}&\stackrel{\rho}{\mapsto} A_{i0}  x_{\mu_{i0}} A_{i0}^{-1}\\
x_{i +1, n-1}&\stackrel{\rho}{\mapsto} (A_{i0}x_{\mu_{i0}}^{-1}
B_{i +1, n-1})x_{\mu_{i +1, n-1}}(A_{i0}x_{\mu_{i0}}^{-1}B_{i +1, n-1})^{-1}\\
x_{i0}&\stackrel{\rho_R(b^{-1})}{\mapsto}x_{i+1,0}\\
&\stackrel{\rho}{\mapsto} A_{i+1,0}  x_{\mu_{i+1,0}} A_{i+1,0}^{-1}\\
x_{i +1, n-1}&\stackrel{\rho_R(b^{-1})}{\mapsto} x_{i0} x_{i +1, n-1} x_{i0}^{-1}\\ &\stackrel{\rho}{\mapsto} (A_{i0}  x_{\mu_{i0}} A_{i0}^{-1})
(A_{i0}x_{\mu_{i0}}^{-1}B_{i +1, n-1})x_{\mu_{i +1, n-1}}(A_{i0}x_{\mu_{i0}}^{-1}B_{i +1, n-1})^{-1}  (A_{i0}  x_{\mu_{i0}} A_{i0}^{-1})^{-1} \\
&=A_{i0}B_{i +1, n-1} x_{\mu_{i +1, n-1}}B_{i +1, n-1}^{-1}A_{i0}^{-1}
\end{align*}
Since  \eqref{equ:circle} , letter length of  $A_{i0}  x_{\mu_{i0}} A_{i0}^{-1}$ is equal to 
$A_{i+1,0}  x_{\mu_{i+1,0}} A_{i+1,0}^{-1}$. Thus we have 
$l(\rho \circ \rho_R(b^{-1})) < l( \rho ) $.

Case(b) is similar to the discussion above.
\end{proof}

\begin{theorem}
$\rho \in AutR_{pn}$ subjects to \eqref{equ:standard}, \eqref{equ:conjugation}, \eqref{equ:circle} and \eqref{equ:circle2} if and only if
$\exists\  \sigma \in B^{orb}_n(\mathbb{C},\mathbb{Z}_p)$ such that
$\rho $ is an element of the equivalent class $\rho_R(\sigma)$.

\end{theorem}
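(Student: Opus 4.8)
The plan is to reformulate the statement as an equality of subgroups. Write
$H=\{\rho\in\Aut R_{pn}:\rho\ \text{satisfies}\ \eqref{equ:standard},\eqref{equ:conjugation},\eqref{equ:circle},\eqref{equ:circle2}\}$; since $\rho_R$ is a genuine homomorphism (the well-definedness lemma), $\rho_R(\sigma)$ is a single automorphism of $R_{pn}$, and the theorem is exactly the assertion $H=\rho_R\big(B^{orb}_n(\mathbb{C},\mathbb{Z}_p)\big)$. I would prove the two inclusions separately. For $\rho_R\big(B^{orb}_n(\mathbb{C},\mathbb{Z}_p)\big)\subseteq H$, first check that $H$ is a subgroup of $\Aut R_{pn}$: each of the four conditions is stable under composition and inversion. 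For \eqref{equ:standard} one computes that if $\rho(x_{ij})=A_{ij}x_{\mu_{ij}}A_{ij}^{-1}$ and $\rho'(x_{ij})=A'_{ij}x_{\mu'_{ij}}A'^{-1}_{ij}$ with $\mu,\mu'$ permutations, then $(\rho\circ\rho')(x_{ij})=\big(\rho(A'_{ij})A_{\mu'_{ij}}\big)x_{\mu(\mu'_{ij})}\big(\rho(A'_{ij})A_{\mu'_{ij}}\big)^{-1}$, again of that shape with permutation $\mu\circ\mu'$, and the analogous computation handles $\rho^{-1}$; conditions \eqref{equ:conjugation}, \eqref{equ:circle}, \eqref{equ:circle2} are manifestly preserved. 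Then read off Definition \ref{def:repre} that $\rho_R(b)$ and each $\rho_R(b_k)$ lie in $H$: they are visibly of the form \eqref{equ:standard} with the displayed permutations, and \eqref{equ:conjugation}--\eqref{equ:circle2} for these generators are precisely the identities recorded in Remark \ref{rem:action} and in the proof of Theorem \ref{thm:faithful}. A subgroup containing the generators contains the whole image, giving this inclusion.

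For the substantive inclusion $H\subseteq\rho_R\big(B^{orb}_n(\mathbb{C},\mathbb{Z}_p)\big)$, I would run a minimal-counterexample descent on the length function $l$ introduced in Lemma \ref{lem:length}. Suppose some $\rho\in H$ is not of the form $\rho_R(\sigma)$, and pick one with $l(\rho)$ minimal (legitimate since $l$ takes values in $\mathbb{Z}_{\geq 0}$). Because $\rho_R(e)=\id_{R_{pn}}$, we have $\rho\neq\id_{R_{pn}}$, so Lemma \ref{lem:absorb} applies and places $\rho$ in case (a) or case (b); in either case Lemma \ref{lem:length} supplies a generator $g\in\{b^{\pm1},b_{j-1}^{\pm1}\}$ (with $j$ the index produced by Lemma \ref{lem:absorb}) for which $l\big(\rho\circ\rho_R(g)\big)<l(\rho)$. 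Since $H$ is a subgroup containing $\rho_R(g)$, the automorphism $\rho\circ\rho_R(g)$ again lies in $H$, so by minimality it equals $\rho_R(\tau)$ for some $\tau$; then $\rho=\rho_R(\tau)\circ\rho_R(g)^{-1}\in\rho_R\big(B^{orb}_n(\mathbb{C},\mathbb{Z}_p)\big)$, a contradiction. Hence $H=\rho_R\big(B^{orb}_n(\mathbb{C},\mathbb{Z}_p)\big)$, and combined with the faithfulness of $\rho_R$ (Theorem \ref{thm:faithful}) this identifies $H$ with $B^{orb}_n(\mathbb{C},\mathbb{Z}_p)$.

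The real mathematical content is already packaged in Lemmas \ref{lem:absorb} and \ref{lem:length}; what remains is bookkeeping, and the termination of the descent is automatic, being descent of a non-negative integer. The one step I expect to need genuine care is the closure of $H$ under composition: one must argue that after reducing the composite conjugator $\rho(A'_{ij})A_{\mu'_{ij}}$ inside $R_{pn}$ the middle letter is still a single generator $x_{\mu(\mu'_{ij})}$, so that $\rho\circ\rho'$ is again in the \emph{simplest} form \eqref{equ:standard} rather than merely conjugate to a generator. This hinges on the fact that the only relations in $R_{pn}$ are the commutations $\big(\prod_{k=i}^{i+p-1}x_{k\bmod p,0}\big)x_{ij}\big(\prod_{k=i}^{i+p-1}x_{k\bmod p,0}\big)^{-1}=x_{ij}$, which never allow a lone generator to merge with its neighbours; so the potential obstacle is real but routine rather than deep.
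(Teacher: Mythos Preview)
Your proof is correct and follows essentially the same route as the paper: the ``if'' direction by checking the generators (the paper just cites Remark~\ref{rem:action}), and the ``only if'' direction by length descent via Lemmas~\ref{lem:absorb} and~\ref{lem:length}, with your minimal-counterexample framing equivalent to the paper's induction on $l(\rho)$. Your explicit verification that $H$ is closed under composition is a point the paper leaves implicit but which is indeed needed for the descent to stay inside $H$.
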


\begin{proof}
"If": We have discussed in Remark \autoref{rem:action} .

"Only if": If $l(\rho)=pn$, then $\rho = id_{R_{pn}} \in \rho_R(e)$. Otherwise by Lemma \ref{lem:absorb} and Lemma \autoref{lem:length} we have a $\sigma \in B^{orb}_n(\mathbb{C},\mathbb{Z}_p)$ such that $l(\rho \circ \rho_R(\sigma)) < l(\rho)$. By induction we can render $ \rho$ as a product of elements from $Im(\rho_R)$. Thus the conclusion follows.
\end{proof}

\noindent Proof of Theorem \autoref{main} : Due to the discussion of Remark \autoref{rem:action} , we have a map
$M_n^{orb} \rightarrow \rho_R(B^{orb}_n(\mathbb{C},\mathbb{Z}_p)) $. Obviously it is a well defined homomorphism. It is a monomorphism exactly as the proof of Theorem \autoref{thm:faithful} .

On the other hand, we can realize $\rho_R(b)$ by a 
$\mathbb{Z}_p$-homeomorphism which fixes outside $B_{\frac{5}{3}}(0,0) $ and is $\frac{2\pi}{p}$ rotation within $B_{\frac{4}{3}}(0,0) $. And we realize 
$\rho _F(b_k)$ by a $\mathbb{Z}_p$-homeomorphism which interchanges 
$q_{ik}$ and $q_{i,k+1}$ and fixes outside those little disks which include them.
Therefore $\rho_R(B^{orb}_n(\mathbb{C},\mathbb{Z}_p)) \subset M_n^{orb}$.

\bibliographystyle{alpha}
\bibliography{./reference/braids_links}
\end{document}